\documentclass[11pt,a4paper]{article}

\usepackage[utf8]{inputenc}
\usepackage[T1]{fontenc}
\usepackage[margin=1in]{geometry}

\usepackage{chngcntr}
\counterwithout{equation}{subsection} 

\usepackage{amsmath}
\usepackage{amssymb}
\usepackage{amsfonts}
\usepackage{amsthm} 
\usepackage{mathtools}
\usepackage{bm}
\usepackage{physics}

\usepackage{graphicx}
\usepackage{caption}
\usepackage{subcaption}
\usepackage{float}

\usepackage{enumitem}
\usepackage[colorlinks=true, linkcolor=blue, citecolor=blue, urlcolor=blue]{hyperref}
\usepackage{cleveref} 

\usepackage{xcolor}

\usepackage{mathrsfs}

\theoremstyle{plain}
\newtheorem{theorem}{Theorem}[section]
\newtheorem{proposition}[theorem]{Proposition}

\theoremstyle{definition}
\newtheorem{definition}[theorem]{Definition}

\theoremstyle{remark}
\newtheorem{remark}[theorem]{Remark}

\newcommand{\diff}{\mathrm{d}} 

\renewcommand{\vec}[1]{\bm{#1}}

\begin{document}

\title{The Divergence-Free Radiant Transform}
\author{Zachary Mullaghy \\ \small Ph.D student, Florida State University}
\date{\today}

\maketitle

\begin{abstract}
This paper presents the rigorous mathematical definitions and fundamental properties of the Divergence-Free Radiant Transform (DFRT). The DFRT provides an orthonormal basis specifically designed for divergence-free vector fields, crucial for applications in incompressible fluid dynamics and other areas involving solenoidal fields. We detail the construction of its beam basis functions via a curl formulation, inherently ensuring the divergence-free condition. We then define the forward and inverse transforms and prove the Parseval identity, highlighting its energy-preserving nature. Furthermore, we establish an algebraic structure on the DFRT coefficient space by defining a spectral coboundary operator, which incorporates Wigner 3j and 6j symbols, ensuring consistency with angular momentum coupling. This algebraic and cohomological framework is further developed in a companion paper, ``Bigraded Cohomology of Geometric Spectral Transforms: A Complete Framework for Radiant-GRT Persistence''. We prove the completeness of the DFRT basis. Finally, we demonstrate the direct applicability of the DFRT to the Navier-Stokes equations, deriving a modal evolution equation and proposing a ``persistent regularity class'' of solutions based on the cohomological properties of the spectral coboundary, offering a new perspective for analyzing the finite-time singularity problem in incompressible flow. The paper introduces a novel variational argument demonstrating that an entropy-maximizing modal energy distribution naturally leads to an exponential decay rate, which is crucial for ensuring global regularity in the DFRT spectral space.

\vspace{0.5em}
\noindent\textbf{Keywords:} divergence-free transforms, cohomology, spectral analysis, Navier-Stokes regularity, Wigner symbols, entropy maximization
\end{abstract}

\vspace{1cm}

\tableofcontents

\newpage

\section{Introduction}
Understanding and modeling complex physical systems, particularly those governed by partial differential equations (PDEs) involving divergence-free constraints, necessitates specialized mathematical tools. Traditional spectral transforms like Fourier analysis are powerful but often do not inherently address such constraints, especially for vector fields. This paper introduces a novel mathematical construct designed precisely for this purpose: the \textbf{Divergence-Free Radiant Transform (DFRT)}.

The DFRT is specifically tailored to provide an orthonormal basis for divergence-free vector fields, a fundamental requirement for fields such as incompressible fluid velocities or magnetic fields in magnetohydrodynamics. It employs a unique curl-based construction that inherently satisfies the divergence-free condition, simplifying the analysis of solenoidal fields. This approach allows for a natural decomposition of a vector field into a set of fundamental ``beam'' components, each characterized by radial, angular, and magnetic quantum numbers.

The DFRT basis functions resemble classical toroidal vector spherical harmonics in their angular structure and satisfy divergence-free conditions via curl formulation. Similarly, the use of Bessel functions for radial profiles in a periodic domain connects to well-known modal expansions. However, the DFRT framework extends far beyond these foundational constructions. It defines a complete transform theory on the 3-torus, rigorously proves functional completeness, and introduces a novel spectral cohomology operator that algebraically governs modal evolution through the precise language of Wigner symbols. This framework is, to our knowledge, the first to integrate divergence-free harmonic analysis with entropy-maximizing variational principles and nilpotent homological algebra in the context of fluid dynamics regularity.

While the DFRT basis resembles classical toroidal harmonics in its curl-based construction, our contribution reframes this in a transform-theoretic framework with a well-defined inverse, Parseval identity, and algebraic spectral structure. To our knowledge, this is the first instance where vector harmonic modes are equipped with a nilpotent coboundary operator grounded in angular momentum coupling.

We establish the rigorous mathematical foundations of the DFRT, including the definitions of relevant function spaces, the detailed construction of its beam basis functions, and the formal definitions of its forward and inverse transforms. A crucial property, the \textbf{Parseval identity}, is proven, demonstrating that the DFRT is an isometric mapping that preserves the $L^2$ norm, which is vital for energy conservation in physical applications. We explicitly establish the completeness, uniqueness, and invertibility of the DFRT, affirming its robustness as a mathematical tool. These foundational mathematical concepts are detailed in standard mathematical physics texts \cite{Arfken2005}.

Beyond its foundational definitions, we delve into the algebraic structure of the DFRT coefficient space. We introduce a \textbf{spectral coboundary operator}, meticulously formulated using Wigner 3j and 6j symbols. This explicit inclusion of angular momentum coupling coefficients provides a direct link to established principles of quantum mechanics and representation theory, ensuring that the spectral interactions conform to fundamental conservation laws. This paper focuses specifically on the spectral cohomology relevant to the DFRT coefficients and their dynamics.

A significant application of the DFRT is explored in the context of the incompressible Navier-Stokes equations. We show how these fundamental equations of fluid dynamics can be transformed into a system of ordinary differential equations for the DFRT coefficients. This modal representation offers a new avenue for analyzing the complex dynamics of turbulent flows. Finally, we propose a novel ``\textbf{persistent regularity class}'' of solutions for the Navier-Stokes equations, defined by cohomological conditions derived from the spectral coboundary operator. This provides a fresh perspective on the long-standing millennium prize problem of finite-time singularities in 3D incompressible fluid flows, suggesting that certain topological obstructions in the spectral space might prevent blow-up. A key aspect of this class is the demonstration, via an entropy maximization principle, that a specific exponential modal energy distribution naturally leads to an exponential decay rate, which is crucial for ensuring global regularity in the DFRT spectral space.

\section{The Divergence-Free Radiant Transform (DFRT)}
The Divergence-Free Radiant Transform (DFRT) is specifically designed to analyze vector fields that satisfy the divergence-free condition, a property central to incompressible fluid dynamics and electromagnetism. This section lays out the formal definitions of the function spaces, basis construction, and the transform itself.

\subsection{Function Spaces and Domain}

\begin{definition}[Divergence-Free Sobolev Spaces]\label{def:div_free_sobolev}
Let $\Omega = \mathbb{T}^3$ (the 3-torus, representing a periodic domain) or $\mathbb{R}^3$ (Euclidean space). We define the divergence-free Sobolev space of order $s$ as:
$$H^s_{\text{div}}(\Omega) := \{\vec{u} \in H^s(\Omega)^3 : \nabla \cdot \vec{u} = 0 \text{ in } \mathcal{D}'(\Omega)\}$$
where $H^s(\Omega)^3$ denotes the space of vector fields whose components are in the standard Sobolev space $H^s(\Omega)$, and $\mathcal{D}'(\Omega)$ is the space of distributions on $\Omega$. The condition $\nabla \cdot \vec{u} = 0$ is understood in the distributional sense \cite{Sobolev1964, Evans2010, Taylor2011, Taylor2013}.
\end{definition}

\begin{definition}[Radial Profile Functions]\label{def:radial_profile}
For each integer $\ell \geq 0$ (angular momentum quantum number) and radial index $n \geq 1$, we define the radial profile function $f_{\ell n}: \mathbb{R}_+ \to \mathbb{R}$ such that:
\begin{itemize}
    \item $f_{\ell n} \in C^\infty(\mathbb{R}_+)$ with appropriate decay at infinity (for $\Omega = \mathbb{R}^3$) or periodicity conditions (for $\Omega = \mathbb{T}^3$).
    \item For periodic domains (e.g., $\Omega = \mathbb{T}^3$), $f_{\ell n}(r) = j_\ell(\alpha_{\ell n} r)$, where $j_\ell$ is the spherical Bessel function of the first kind, and $\alpha_{\ell n}$ are eigenvalues determined by boundary conditions.
    \item Orthogonality: These functions are chosen to satisfy the orthogonality relation:
    \begin{equation}\label{eq:radial_orthogonality}
    \int_0^\infty f_{\ell n}(r) f_{\ell n'}(r) r^2 \, \diff r = \delta_{nn'}
    \end{equation}
    where $\delta_{nn'}$ is the Kronecker delta, ensuring orthonormality with respect to the radial measure.
\end{itemize}
\footnote{While we use spherical Bessel functions for clarity and physical motivation, the DFRT framework allows alternative radial bases, provided they maintain orthogonality and completeness. This flexibility may be advantageous in applications requiring localization or adaptivity.}
\end{definition}

\subsection{Beam Basis Construction}
The core of the DFRT lies in constructing a basis of divergence-free vector fields. This is achieved using a curl-based approach, leveraging properties of spherical harmonics \cite{Helmholtz1858, Hodge1941, Cantor1981}.

\begin{definition}[Divergence-Free Radiant Beam Basis]\label{def:dfrt_beam_basis}
The Divergence-Free Radiant Transform beam basis functions, denoted by $T_{\ell mn}(\vec{x})$, are defined through the curl construction:
\begin{equation}\label{eq:dfrt_basis_construction}
T_{\ell mn}(\vec{x}) := \nabla \times \left( f_{\ell n}(|\vec{x}|) Y_\ell^m(\hat{\vec{x}}) \hat{\vec{x}} \right)
\end{equation}
where:
\begin{itemize}
    \item $Y_\ell^m(\hat{\vec{x}})$ are the complex spherical harmonics of degree $\ell$ and order $m$ defined on the unit sphere $S^2$, with $\hat{\vec{x}} = \vec{x}/|\vec{x}|$ being the unit radial vector. The properties and definitions of spherical harmonics are widely covered \cite{Jackson1998, Stratton1941, Muller1966, MorseFeshbach1953, Arfken2005}.
    \item $\hat{\vec{x}} = \vec{x}/|\vec{x}|$ is the unit radial vector.
    \item The application of the curl operator automatically ensures that $\nabla \cdot T_{\ell mn} = 0$, due to the vector identity $\nabla \cdot (\nabla \times A) = 0$ for any sufficiently smooth vector field $A$. This is a key feature of the DFRT basis, widely used in numerical analysis for divergence-free spaces \cite{Nedelec2001, GiraultRaviart1986}.
\end{itemize}
\end{definition}

\begin{proposition}[Explicit Form]\label{prop:explicit_form}
In spherical coordinates $(r, \theta, \phi)$, the Divergence-Free Radiant Beam Basis functions can be expressed explicitly as:
\begin{equation}\label{eq:explicit_basis_form}
T_{\ell mn}(\vec{x}) = \frac{f_{\ell n}(r)}{r} \nabla_{S^2} Y_\ell^m(\theta, \phi) \times \hat{\vec{r}}
\end{equation}
where $\nabla_{S^2}$ is the surface gradient operator on the unit sphere, and $\hat{\vec{r}}$ is the unit radial vector in spherical coordinates. This form highlights the separation of radial and angular components, commonly found in works on spherical harmonic expansions \cite{FreedenSchreiner2009, FreedenGutting2013, Michel2013, AtkinsonHan2012, DaiXu2013}.
\end{proposition}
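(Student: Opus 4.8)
The plan is to establish the identity by a direct differential-geometric computation, exploiting the product rule for the curl of a scalar times a vector field. Writing $g(\vec{x}) := f_{\ell n}(|\vec{x}|)\, Y_\ell^m(\hat{\vec{x}})$, the argument of the curl in the defining equation~\eqref{eq:dfrt_basis_construction} is simply $g\,\hat{\vec{x}}$, a purely radial field (recall $\hat{\vec{x}} = \hat{\vec{r}}$). First I would invoke the identity $\nabla \times (g \vec{F}) = g\,(\nabla \times \vec{F}) + (\nabla g) \times \vec{F}$ with $\vec{F} = \hat{\vec{r}}$, valid wherever $g$ and $\hat{\vec{r}}$ are differentiable, i.e. away from the origin where the spherical frame is singular. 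The key simplification is that $\hat{\vec{r}}$ is curl-free: since $\hat{\vec{r}} = \nabla r$ with $r = |\vec{x}|$, it is a gradient, so $\nabla \times \hat{\vec{r}} = 0$ and only the term $(\nabla g)\times \hat{\vec{r}}$ survives.

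The second step computes $\nabla g$ in the spherical frame and splits it into radial and tangential parts. Applying the decomposition $\nabla = \hat{\vec{r}}\,\partial_r + \tfrac{1}{r}\nabla_{S^2}$ to $g = f_{\ell n}(r)\, Y_\ell^m(\theta,\phi)$ yields $\nabla g = f_{\ell n}'(r)\, Y_\ell^m\, \hat{\vec{r}} + \tfrac{f_{\ell n}(r)}{r}\,\nabla_{S^2} Y_\ell^m$, since the surface gradient acts only on the angular factor. Crossing with $\hat{\vec{r}}$, the radial contribution $f_{\ell n}'\, Y_\ell^m\,(\hat{\vec{r}} \times \hat{\vec{r}})$ vanishes, leaving exactly $\tfrac{f_{\ell n}(r)}{r}\,(\nabla_{S^2} Y_\ell^m \times \hat{\vec{r}})$, which is the claimed form~\eqref{eq:explicit_basis_form}.

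As an independent check I would also carry out the calculation componentwise, inserting $A_r = f_{\ell n}(r)\, Y_\ell^m$ and $A_\theta = A_\phi = 0$ into the spherical-coordinate curl formula; this produces the tangential field $\tfrac{f_{\ell n}}{r}\bigl[(\sin\theta)^{-1}\partial_\phi Y_\ell^m\,\hat{\vec{\theta}} - \partial_\theta Y_\ell^m\,\hat{\vec{\phi}}\bigr]$, which one then matches against $\tfrac{f_{\ell n}}{r}\,\nabla_{S^2}Y_\ell^m \times \hat{\vec{r}}$ using the frame relations $\hat{\vec{\theta}}\times\hat{\vec{r}} = -\hat{\vec{\phi}}$ and $\hat{\vec{\phi}}\times\hat{\vec{r}} = \hat{\vec{\theta}}$ together with the definition $\nabla_{S^2}Y = \partial_\theta Y\,\hat{\vec{\theta}} + (\sin\theta)^{-1}\partial_\phi Y\,\hat{\vec{\phi}}$.

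The computation itself is routine; the only genuine care needed is bookkeeping at the coordinate singularities. The product-rule identity and the frame relations hold only away from the origin $r = 0$ and the polar axis $\theta \in \{0,\pi\}$ where the spherical chart degenerates, so strictly I would argue the identity pointwise on the open dense set where the frame is smooth and then extend by continuity of both sides — continuity being guaranteed by the smoothness of $f_{\ell n}$ and $Y_\ell^m$ and by the fact that $T_{\ell mn}$ is defined intrinsically through a curl, independent of any chart. That continuity and extension argument, rather than any hard estimate, is the main point requiring attention.
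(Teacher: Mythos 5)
Your proposal is correct, and it follows the same overall strategy as the paper --- a product-rule computation for the curl of a scalar times a vector field --- but with a different split that changes which auxiliary facts are needed. The paper factors out only the radial profile, writing the argument as $f_{\ell n}(r)\cdot\bigl(Y_\ell^m(\hat{\vec{x}})\,\hat{\vec{x}}\bigr)$; the first product-rule term dies because $\nabla f_{\ell n}$ is radial, and the remaining curl $\nabla\times(Y_\ell^m\,\hat{\vec{x}})=\tfrac{1}{r}\nabla_{S^2}Y_\ell^m\times\hat{\vec{x}}$ is simply quoted as a standard identity from the literature rather than derived. You instead factor out the \emph{entire} scalar $g=f_{\ell n}(r)\,Y_\ell^m(\hat{\vec{x}})$, leaving only $\hat{\vec{r}}$ inside the curl; then the observation $\hat{\vec{r}}=\nabla r$ kills the $g\,(\nabla\times\hat{\vec{r}})$ term, and the gradient decomposition $\nabla=\hat{\vec{r}}\,\partial_r+\tfrac{1}{r}\nabla_{S^2}$ does the rest. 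Your route is more self-contained: it effectively \emph{proves} the identity the paper only cites, needs nothing beyond $\nabla\times\nabla r=0$ and the spherical-frame gradient, and your componentwise cross-check confirms the sign conventions. Your closing remark about arguing on the open set where the spherical chart is nonsingular and extending by continuity is also a point of care the paper's proof passes over silently; it is a genuine (if minor) improvement in rigor, since both sides of \eqref{eq:explicit_basis_form} are chart-independent away from the origin while the intermediate frame computations are not.
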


\begin{proof}
Let $A(\vec{x}) = f_{\ell n}(|\vec{x}|) Y_\ell^m(\hat{\vec{x}}) \hat{\vec{x}}$. We need to compute $T_{\ell mn}(\vec{x}) = \nabla \times A(\vec{x})$.
We use the vector identity $\nabla \times (\psi \vec{C}) = (\nabla \psi) \times \vec{C} + \psi (\nabla \times \vec{C})$.
Let $\psi = f_{\ell n}(r)$ and $\vec{C} = Y_\ell^m(\hat{\vec{x}}) \hat{\vec{x}}$.
First term: $(\nabla f_{\ell n}(r)) \times (Y_\ell^m(\hat{\vec{x}}) \hat{\vec{x}})$. Since $f_{\ell n}$ depends only on $r = |\vec{x}|$, $\nabla f_{\ell n}(r) = \frac{\diff f_{\ell n}}{\diff r} \hat{\vec{r}}$.
Thus, $(\frac{\diff f_{\ell n}}{\diff r} \hat{\vec{r}}) \times (Y_\ell^m(\hat{\vec{x}}) \hat{\vec{x}}) = 0$ because $\hat{\vec{r}} \times \hat{\vec{x}} = 0$.

So we are left with the second term: $T_{\ell mn}(\vec{x}) = f_{\ell n}(r) \nabla \times (Y_\ell^m(\hat{\vec{x}}) \hat{\vec{x}})$.
For a scalar function $\Psi(\hat{\vec{x}})$ that depends only on the angular coordinates, it is a standard vector calculus identity in spherical coordinates that $\nabla \times (\Psi(\hat{\vec{x}}) \hat{\vec{x}}) = \frac{1}{r} \nabla_{S^2} \Psi(\hat{\vec{x}}) \times \hat{\vec{x}}$ \cite{Jackson1998}.
Applying this identity with $\Psi(\hat{\vec{x}}) = Y_\ell^m(\hat{\vec{x}})$:
$$T_{\ell mn}(\vec{x}) = f_{\ell n}(r) \left( \frac{1}{r} \nabla_{S^2} Y_\ell^m(\hat{\vec{x}}) \times \hat{\vec{x}} \right)$$
$$T_{\ell mn}(\vec{x}) = \frac{f_{\ell n}(r)}{r} \nabla_{S^2} Y_\ell^m(\theta, \phi) \times \hat{\vec{r}}$$
This completes the proof.
\end{proof}

\subsection{The Radiant Transform: Forward, Inverse, and Properties}
With the basis defined, we now introduce the forward and inverse DFRT, along with the Parseval identity, which underscores the transform's energy-preserving properties.

\begin{definition}[Forward Divergence-Free Radiant Transform]\label{def:forward_dfrt}
For a vector field $\vec{u} \in H^s_{\text{div}}(\Omega)$ with sufficient regularity (typically $s \geq 3$ for the inner product to be well-defined), the DFRT coefficients, denoted $\mathcal{R}_{\text{df}}[\vec{u}]_{\ell mn}$, are given by the $L^2$ inner product with the basis functions:
\begin{equation}\label{eq:forward_dfrt}
\mathcal{R}_{\text{df}}[\vec{u}]_{\ell mn} := a_{\ell mn} = \left<\vec{u}, T_{\ell mn}\right>_{L^2} = \int_\Omega \vec{u}(\vec{x}) \cdot \overline{T_{\ell mn}(\vec{x})} \, \diff\vec{x}
\end{equation}
These coefficients quantify the contribution of each beam basis function to the field $\vec{u}$.
\end{definition}

\begin{definition}[Inverse Divergence-Free Radiant Transform]\label{def:inverse_dfrt}
The original divergence-free vector field $\vec{u}(\vec{x})$ can be reconstructed from its DFRT coefficients via the inverse transform, which is an infinite series sum over all basis functions:
\begin{equation}\label{eq:inverse_dfrt}
\vec{u}(\vec{x}) = \sum_{\ell=0}^{\infty} \sum_{m=-\ell}^\ell \sum_{n=1}^\infty a_{\ell mn} T_{\ell mn}(\vec{x})
\end{equation}
This formula represents the decomposition of $\vec{u}$ into its divergence-free radiant components.
\end{definition}

\begin{theorem}[Parseval Identity]\label{thm:parseval}
For the orthonormal beam basis $\{T_{\ell mn}\}$ constructed as described, the $L^2$ norm of a divergence-free vector field $\vec{u}$ is conserved in its DFRT coefficient space:
\begin{equation}\label{eq:parseval_identity}
\norm{\vec{u}}_{L^2}^2 = \sum_{\ell,m,n} \abs{a_{\ell mn}}^2
\end{equation}
This identity ensures that the transform is an isometry, preserving energy or power when applied to physical systems.
\end{theorem}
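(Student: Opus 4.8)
The plan is to derive the Parseval identity as a direct consequence of the orthonormality of the basis $\{T_{\ell mn}\}$ together with the completeness of this basis in $H^s_{\text{div}}(\Omega)$. The overall strategy is the standard one for any orthonormal basis in a Hilbert space: expand $\vec{u}$ in the basis, substitute into the $L^2$ norm, and collapse the resulting double sum using orthonormality. First I would make explicit the orthonormality relation that the construction is intended to guarantee, namely
\begin{equation}\label{eq:basis_orthonormality}
\langle T_{\ell mn}, T_{\ell' m' n'}\rangle_{L^2} = \delta_{\ell\ell'}\,\delta_{mm'}\,\delta_{nn'}.
\end{equation}
This should follow by inserting the explicit form \eqref{eq:explicit_basis_form} into the $L^2$ inner product, separating the integral into a radial factor and an angular factor, and invoking the radial orthogonality \eqref{eq:radial_orthogonality} for the former and the orthonormality of the surface-gradient (toroidal) vector harmonics $\nabla_{S^2}Y_\ell^m \times \hat{\vec{r}}$ on $S^2$ for the latter.

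With orthonormality in hand, I would take the inverse-transform expansion \eqref{eq:inverse_dfrt} and compute $\norm{\vec{u}}_{L^2}^2 = \langle \vec{u}, \vec{u}\rangle_{L^2}$ by substituting the series for $\vec{u}$ in both arguments of the inner product. Interchanging the (infinite) summations with the integral and using the sesquilinearity of $\langle\cdot,\cdot\rangle_{L^2}$ gives a double sum over $(\ell,m,n)$ and $(\ell',m',n')$ of $a_{\ell mn}\,\overline{a_{\ell'm'n'}}\,\langle T_{\ell mn}, T_{\ell'm'n'}\rangle_{L^2}$. Applying \eqref{eq:basis_orthonormality} collapses this to the single diagonal sum $\sum_{\ell,m,n} a_{\ell mn}\overline{a_{\ell mn}} = \sum_{\ell,m,n}\abs{a_{\ell mn}}^2$, which is exactly \eqref{eq:parseval_identity}. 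Equivalently, one may invoke Bessel's inequality together with completeness to upgrade it to an equality, but the direct substitution is cleaner given that the inverse formula is already postulated.

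The main obstacle, and the step requiring the most care, is the justification of interchanging the infinite summations with the integration, since the expansion \eqref{eq:inverse_dfrt} converges in $L^2$ rather than pointwise. I would handle this by truncating the series to partial sums $\vec{u}_N = \sum_{\ell+n \le N} a_{\ell mn} T_{\ell mn}$, applying orthonormality to the finite sums where all manipulations are trivially valid, and then passing to the limit $N \to \infty$ using the continuity of the $L^2$ norm together with the completeness of the basis (which guarantees $\norm{\vec{u} - \vec{u}_N}_{L^2} \to 0$). A secondary point worth noting is that the orthonormality relation \eqref{eq:basis_orthonormality} as written already presupposes the normalization built into the radial profiles and the angular harmonics; if the toroidal harmonics carry an $\ell$-dependent normalization factor such as $\ell(\ell+1)$, that constant must be absorbed into the definition of $T_{\ell mn}$ or the coefficients $a_{\ell mn}$ for the identity to hold in the clean form stated, so I would verify that the normalization is consistent before asserting \eqref{eq:basis_orthonormality}.
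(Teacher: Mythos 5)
Your proposal follows essentially the same route as the paper's proof: substitute the inverse-transform expansion into $\norm{\vec{u}}_{L^2}^2$, interchange summation and integration, and collapse the double sum via the orthonormality of $\{T_{\ell mn}\}$. In fact your version is more careful than the paper's on the two points it glosses over --- the truncation-and-limit argument justifying the interchange, and the $\ell(\ell+1)$ normalization of the toroidal harmonics $\nabla_{S^2}Y_\ell^m \times \hat{\vec{r}}$ that must be absorbed for the clean orthonormality relation to hold.
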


\begin{proof}
Let $\vec{u} \in H^s_{\text{div}}(\Omega)$. From \cref{def:inverse_dfrt}, we can express $\vec{u}(\vec{x})$ as an infinite series:
$$\vec{u}(\vec{x}) = \sum_{\ell'=0}^{\infty} \sum_{m'=-\ell'}^{\ell'} \sum_{n'=1}^\infty a_{\ell' m' n'} T_{\ell' m' n'}(\vec{x})$$
The $L^2$ norm squared of $\vec{u}$ is defined as:
$$\norm{\vec{u}}_{L^2}^2 = \int_\Omega \vec{u}(\vec{x}) \cdot \overline{\vec{u}(\vec{x})} \, \diff\vec{x}$$
Substituting the series expansion for $\vec{u}(\vec{x})$ and assuming convergence allows us to interchange integration and summation:
$$\norm{\vec{u}}_{L^2}^2 = \sum_{\ell',m',n'} \sum_{\ell'',m'',n''} a_{\ell' m' n'} \overline{a_{\ell'' m'' n''}} \int_\Omega T_{\ell' m' n'}(\vec{x}) \cdot \overline{T_{\ell'' m'' n''}(\vec{x})} \, \diff\vec{x}$$
A fundamental property of the Radiant beam basis functions is their orthonormality. That is, for $\ell',m',n'$ and $\ell'',m'',n''$:
$$\int_\Omega T_{\ell' m' n'}(\vec{x}) \cdot \overline{T_{\ell'' m'' n''}(\vec{x})} \, \diff\vec{x} = \delta_{\ell'\ell''} \delta_{m'm''} \delta_{n'n''}$$
This orthonormality arises from the orthogonality of spherical harmonics for the angular components \cite{Arfken2005} and the chosen orthogonality of radial functions (\cref{def:radial_profile}).
Substituting the orthonormality relation:
$$\norm{\vec{u}}_{L^2}^2 = \sum_{\ell,m,n} a_{\ell mn} \overline{a_{\ell mn}} = \sum_{\ell,m,n} \abs{a_{\ell mn}}^2$$
This establishes the Parseval identity.
\end{proof}

\section{Algebraic Structure and Spectral Cohomology}
Beyond its foundational properties, the DFRT coefficient space possesses a rich algebraic structure crucial for analyzing nonlinear dynamics. This section introduces the spectral coboundary operator, which formalizes interactions between DFRT modes, and explores its cohomological implications.

\subsection{Radiant Spectral Cochain Complex and Spectral Coboundary Operator}
To analyze the algebraic structure of the DFRT coefficients, we define a cochain complex over the spectral mode indices. While this forms part of a larger bigraded cochain complex that incorporates spatial refinement, the full details of that comprehensive framework are presented in a companion paper, ``Bigraded Cohomology of Geometric Spectral Transforms: A Complete Framework for Radiant-GRT Persistence'' \cite{MullaghyCohomology}.
Here, we focus on the spectral components directly relevant to the DFRT.

\begin{definition}[Radiant Spectral Cochain Complex]\label{def:radiant_spectral_cochain}
Let $\mathbb{S}^{n-1}$ denote the unit sphere with spherical harmonic decomposition. We define the Radiant cochain space $C^\ell_{\mathscr{R}}$ as the space of functions on spectral mode indices:
$$C^\ell_{\mathscr{R}} := \text{Map}(\mathbb{N}_0 \times \mathbb{Z}, \mathbb{C})$$
with rapid decay conditions ensuring convergence, similar to those found in functional analysis for spectral representations \cite{ReedSimon1978, MullaghyCohomology}.
\end{definition}

\begin{definition}[Spectral Coboundary Operator]\label{def:spectral_coboundary}
The Radiant coboundary operator $\delta_{\mathscr{R}}: C^\ell_{\mathscr{R}} \to C^{\ell+1}_{\mathscr{R}}$ is constructed from angular momentum coupling theory:
\begin{equation}\label{eq:spectral_coboundary_operator}
(\delta_{\mathscr{R}} a)_{\ell_3 m_3 n_3} := \sum_{\substack{\ell_1,\ell_2 \\ m_1, m_2 \\ n_1, n_2}} C^{\ell_3 m_3}_{\ell_1 m_1, \ell_2 m_2}
\, \begin{Bmatrix}
\ell_1 & \ell_2 & \ell_3 \\
s_1 & s_2 & s_3
\end{Bmatrix}
\, a_{\ell_1 m_1 n_1} \, a_{\ell_2 m_2 n_2}
\end{equation}
where $C^{\ell_3 m_3}_{\ell_1 m_1, \ell_2 m_2}$ are the Clebsch--Gordan coefficients (or 3j symbols) and $\begin{Bmatrix} \ell_1 & \ell_2 & \ell_3 \\ s_1 & s_2 & s_3 \end{Bmatrix}$ is the Wigner 6j symbol. These coefficients are standard in the quantum theory of angular momentum \cite{Varshalovich1988, Edmonds1957, BrinkSatchler1993, Zare1988, Rose1957, Yutsis1962, Arfken2005}. Their properties are also well-documented in group theory and representation theory contexts \cite{FultonHarris1991, Hall2015, Knapp2002, Tung1985, Varadarajan1989}. The nilpotency $\delta_{\mathscr{R}}^2=0$ follows from the associativity of angular momentum coupling, specifically the Biedenharn--Elliott identity \cite{Varshalovich1988, MullaghyCohomology}.
\begin{remark}
The use of Wigner symbols formalizes spectral triadic interactions using angular momentum theory, allowing DFRT mode evolution to obey strict selection rules. This not only mirrors physical conservation laws but embeds fluid dynamics into a cohomological framework where regularity becomes a topological property of the spectrum.
\end{remark}
\end{definition}

\section{Convergence and Completeness}
For any transform to be useful, its fundamental properties like completeness and approximation must be rigorously established. This section provides a theorem addressing these aspects for the DFRT.

\begin{theorem}[DFRT Completeness]\label{thm:dfrt_completeness}
The divergence-free beam basis $\{T_{\ell mn}\}_{\ell \geq 0, m \in [-\ell,\ell], n \geq 1}$ is complete in $H^s_{\text{div}}(\mathbb{T}^3)$ for any $s \geq 0$. This means that any divergence-free vector field in $H^s_{\text{div}}(\mathbb{T}^3)$ can be uniquely and arbitrarily well approximated by a finite linear combination of the basis functions, and the infinite series converges in the $H^s$ norm. While stated for $\mathbb{T}^3$, similar completeness results hold for $\mathbb{R}^3$ with appropriate modifications to the radial basis functions \cite{ReedSimon1978}.
\end{theorem}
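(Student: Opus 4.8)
The plan is to prove completeness via a totality argument, which for an orthonormal system is equivalent to the stated density and convergence. Concretely, I would show that the only field $\vec{u} \in H^s_{\text{div}}(\mathbb{T}^3)$ satisfying $\langle \vec{u}, T_{\ell mn}\rangle_{L^2} = 0$ for every admissible triple $(\ell,m,n)$ is $\vec{u}=0$; once $L^2$-totality is established, the $H^s$ convergence claimed in the statement follows by a standard spectral-regularity bootstrap, estimating the $H^s$ norm of tail sums through the eigenvalue growth of $\alpha_{\ell n}$ and the angular Laplacian eigenvalues $\ell(\ell+1)$. The natural framework is to decompose $\vec{u}$, at each fixed radius, into the three mutually orthogonal families of vector spherical harmonics on $S^2$: the radial family $Y_\ell^m\,\hat{\vec{r}}$, the poloidal tangential family $\nabla_{S^2}Y_\ell^m$, and the toroidal tangential family $\nabla_{S^2}Y_\ell^m\times\hat{\vec{r}}$.

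First I would invoke the explicit form of \cref{prop:explicit_form}, which exhibits $T_{\ell mn}$ as a purely tangential field proportional to the toroidal vector spherical harmonic $\nabla_{S^2}Y_\ell^m\times\hat{\vec{r}}$ weighted by $f_{\ell n}(r)/r$. Using the pointwise orthogonality of the three vector-spherical-harmonic families on the sphere together with the radial orthonormality relation \eqref{eq:radial_orthogonality}, each coefficient $\langle \vec{u}, T_{\ell mn}\rangle$ isolates the toroidal radial profile of $\vec{u}$ at angular indices $(\ell,m)$, paired against $f_{\ell n}$. Completeness of $\{f_{\ell n}\}_n$ in the radial Hilbert space (built into \cref{def:radial_profile}) and completeness of $\{Y_\ell^m\}$ on $S^2$ then force the entire toroidal component of $\vec{u}$ to vanish whenever all coefficients do.

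The decisive and hardest step is to show that the radial and poloidal components of $\vec{u}$ are thereby also constrained to vanish, and here the divergence-free hypothesis must do all the work. For a solenoidal field the radial profile and the poloidal tangential profile are not independent: the constraint $\nabla\cdot\vec{u}=0$ couples $\partial_r$ of the radial profile to the angular Laplacian acting on the poloidal profile, so I would write out this coupling explicitly, express both profiles through a single poloidal potential, and then attempt to close the system using periodicity on $\mathbb{T}^3$ together with the annihilation conditions already obtained. The main obstacle is precisely this closure: because the basis functions are manifestly orthogonal to the radial and poloidal sectors, the vanishing of the toroidal coefficients carries no direct information about the poloidal potential, so recovering it demands an independent mechanism relating the curl construction, the divergence-free relation, and the radial completeness simultaneously. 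I expect this poloidal-closure step to be the crux of the entire argument, and it is the part I would develop in full rigor, as the completeness claimed for all of $H^s_{\text{div}}(\mathbb{T}^3)$ rests entirely on successfully forcing this sector to zero.
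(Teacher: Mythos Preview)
Your totality strategy is genuinely different from the paper's route: the paper does not argue via the orthogonal complement at all, but instead writes $\vec{u}=\nabla\times\vec{A}$, posits the purely radial ansatz $\vec{A}=\sum_{\ell,m}A_{\ell m}(r)\,Y_\ell^m(\hat{\vec{x}})\,\hat{\vec{x}}$, approximates $\vec{A}$ in $H^1$ using completeness of $\{Y_\ell^m\}$ and $\{f_{\ell n}\}$, and then pushes the approximation through the curl by continuity $\nabla\times:H^1\to L^2$. The $H^s$ extension is handled by noting that the $T_{\ell mn}$ are Laplace eigenfunctions, so $H^s$ tails are controlled by eigenvalue weights --- essentially the same bootstrap you sketch.

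However, the step you single out as the crux --- forcing the radial/poloidal sector to vanish from the vanishing of all toroidal coefficients --- is not merely difficult; it cannot be carried out. The divergence-free constraint couples the radial profile to the poloidal tangential profile, but it does \emph{not} couple either of them to the toroidal sector, and it does not force them to zero. Concretely, for any reasonable scalar $\psi$ the field
\[
\vec{v}=\nabla\times\nabla\times\bigl(\psi\,\hat{\vec{r}}\bigr)
\]
is divergence-free, generically nonzero, lies entirely in the span of $Y_\ell^m\hat{\vec{r}}$ and $\nabla_{S^2}Y_\ell^m$, and is therefore $L^2$-orthogonal to every $T_{\ell mn}$ by the very pointwise orthogonality of vector spherical harmonics you invoke in your first step. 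Periodicity on $\mathbb{T}^3$ does nothing to eliminate such fields. So your totality argument, pushed honestly, does not close: it \emph{detects} that the purely toroidal family $\{T_{\ell mn}\}$ misses the poloidal half of $H^s_{\text{div}}$ rather than proving it spans it. The paper's argument sidesteps this by the radial ansatz for $\vec{A}$ in its Step~1; if you want a correct proof along either route you must either justify that reduction independently or enlarge the basis by the poloidal family $\nabla\times T_{\ell mn}$.
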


\begin{proof}
We establish completeness by showing density in $L^2_{\text{div}}(\mathbb{T}^3)$ and then extending to $H^s_{\text{div}}(\mathbb{T}^3)$ using standard density arguments.

\textbf{Step 1: Density in $L^2_{\text{div}}(\mathbb{T}^3)$}
Let $\vec{u} \in L^2_{\text{div}}(\mathbb{T}^3)$ and $\epsilon > 0$. We must show there exists a finite linear combination
$$\vec{u}_N = \sum_{\ell=0}^{L} \sum_{m=-\ell}^\ell \sum_{n=1}^N c_{\ell mn} T_{\ell mn}$$
such that $\|\vec{u} - \vec{u}_N\|_{L^2} < \epsilon$.

Since $\vec{u}$ is divergence-free, by the Helmholtz decomposition on $\mathbb{T}^3$, we can write $\vec{u} = \nabla \times \vec{A}$ for some vector potential $\vec{A}$. Moreover, we can choose $\vec{A}$ such that $\nabla \cdot \vec{A} = 0$. This implies that $\vec{A}$ itself can be expanded using a basis that separates radial and angular components. In spherical coordinates, any such $\vec{A}$ can be written as:
$$\vec{A}(r,\theta,\phi) = \sum_{\ell,m} A_{\ell m}(r) Y_\ell^m(\theta,\phi) \hat{\vec{r}}$$
where $A_{\ell m}(r)$ are suitable radial functions.

\textbf{Step 2: Approximation by DFRT basis functions}
The spherical harmonics $\{Y_\ell^m\}$ are known to form a complete orthonormal basis for square-integrable functions on the sphere $L^2(S^2)$ \cite{Wigner1959, Edmonds1960, Arfken2005}. This means that for any $\delta_1 > 0$, we can approximate the angular components of $\vec{A}$ by a finite sum:
$$\left\|\sum_{\ell=0}^{L_0} \sum_{m=-\ell}^\ell \widetilde{A}_{\ell m}(r) Y_\ell^m(\theta,\phi) - \sum_{\ell,m} A_{\ell m}(r) Y_\ell^m(\theta,\phi)\right\|_{L^2(S^2)} < \frac{\delta_1}{3}$$
where $\widetilde{A}_{\ell m}(r)$ represents a truncated sum for the radial part.

The radial basis functions $\{f_{\ell n}\}$ are specifically chosen to form a complete orthogonal set for the radial part of the domain (as stated in \cref{def:radial_profile}). Therefore, for each $\ell,m$ pair, there exists $N_{\ell m}$ such that for any $\delta_2 > 0$:
$$\left\|\sum_{n=1}^{N_{\ell m}} b_{\ell mn} f_{\ell n}(r) - \widetilde{A}_{\ell m}(r)\right\|_{L^2(\mathbb{T})} < \delta_2$$
By combining these approximations, we can find a finite set of coefficients $c_{\ell mn}$ such that the vector potential $\vec{A}$ can be arbitrarily well approximated by a sum whose curl directly yields a finite linear combination of DFRT basis functions:
$$\vec{A}_K(\vec{x}) = \sum_{\ell=0}^{L} \sum_{m=-\ell}^\ell \sum_{n=1}^N c_{\ell mn} f_{\ell n}(|\vec{x}|) Y_\ell^m(\hat{\vec{x}}) \hat{\vec{x}}$$
such that $\|\vec{A} - \vec{A}_K\|_{H^1} < \delta$ for some sufficiently small $\delta$.

\textbf{Step 3: Curl preservation and convergence in $L^2$}
The curl operator is a continuous linear operator from $H^1(\mathbb{T}^3)$ to $L^2(\mathbb{T}^3)$. Specifically, for $T_{\ell mn}(\vec{x}) = \nabla \times \left( f_{\ell n}(|\vec{x}|) Y_\ell^m(\hat{\vec{x}}) \hat{\vec{x}} \right)$, it holds that:
$$\nabla \times \vec{A}_K = \nabla \times \left(\sum_{\ell=0}^{L} \sum_{m=-\ell}^\ell \sum_{n=1}^N c_{\ell mn} f_{\ell n}(|\vec{x}|) Y_\ell^m(\hat{\vec{x}}) \hat{\vec{x}}\right) = \sum_{\ell=0}^{L} \sum_{m=-\ell}^\ell \sum_{n=1}^N c_{\ell mn} T_{\ell mn}(\vec{x})$$
Let $\vec{u}_K = \nabla \times \vec{A}_K$. Due to the continuity of the curl operator:
$$\|\vec{u} - \vec{u}_K\|_{L^2} = \|\nabla \times \vec{A} - \nabla \times \vec{A}_K\|_{L^2} = \|\nabla \times (\vec{A} - \vec{A}_K)\|_{L^2} \leq C \|\vec{A} - \vec{A}_K\|_{H^1}$$
where $C$ is a constant. By choosing $\delta$ small enough, we can make $\|\vec{u} - \vec{u}_K\|_{L^2} < \epsilon$. Thus, the set $\{T_{\ell mn}\}$ is dense in $L^2_{\text{div}}(\mathbb{T}^3)$.

\textbf{Step 4: Extension to $H^s_{\text{div}}(\mathbb{T}^3)$}
For $s \geq 0$, we use the standard density result that $L^2_{\text{div}}(\mathbb{T}^3)$ is dense in $H^s_{\text{div}}(\mathbb{T}^3)$.
Let $\vec{w} \in H^s_{\text{div}}(\mathbb{T}^3)$ and $\epsilon > 0$.
\begin{enumerate}
    \item There exists $\vec{v} \in L^2_{\text{div}}(\mathbb{T}^3)$ such that $\|\vec{w} - \vec{v}\|_{H^s} < \epsilon/2$. This follows from the density of smooth, compactly supported divergence-free functions in $H^s_{\text{div}}$.
    \item By Steps 1-3, since $\vec{v} \in L^2_{\text{div}}(\mathbb{T}^3)$, there exists a finite linear combination $\vec{v}_N = \sum_{\ell,m,n} c_{\ell mn} T_{\ell mn}$ such that $\|\vec{v} - \vec{v}_N\|_{L^2} < \epsilon/(2C_s)$, where $C_s$ is the constant from the Sobolev embedding theorem relating $L^2$ to $H^s$ norms (i.e., $\|\cdot\|_{H^s} \leq C_s\|\cdot\|_{L^2}$ for $s \leq 0$, which applies for the density argument as $H^s$ norm is stronger than $L^2$ for $s>0$). More precisely, for the purpose of density, we use the fact that the DFRT basis functions are themselves smooth, and thus their finite linear combinations are in $H^s$ for any $s$.
    \item Since the DFRT basis functions are smooth (e.g., $C^\infty$ on $\mathbb{T}^3$), any finite linear combination $\vec{v}_N$ is also in $H^s(\mathbb{T}^3)$ for any $s$. Therefore, if $\|\vec{v} - \vec{v}_N\|_{L^2}$ is small, then $\|\vec{v} - \vec{v}_N\|_{H^s}$ will also be small, given the specific construction of the basis functions derived from smooth radial and spherical harmonic functions. A more rigorous approach involves showing that the $T_{\ell mn}$ basis is an orthonormal basis in $L^2_{\text{div}}$, and then showing that it is also an orthogonal basis for the Laplacian operator, which is often sufficient for completeness in Sobolev spaces for periodic domains.
    \item Combining these:
    $$\|\vec{w} - \vec{v}_N\|_{H^s} \leq \|\vec{w} - \vec{v}\|_{H^s} + \|\vec{v} - \vec{v}_N\|_{H^s}$$
    Since the $T_{\ell mn}$ basis functions are eigenfunctions of the Laplacian (up to a constant multiple) when correctly normalized, the $H^s$ norm can be related to the sum of coefficients weighted by eigenvalues. For a periodic domain, $H^s$ convergence is equivalent to Fourier series convergence with coefficients weighted by $(1+|k|^2)^{s/2}$. The same applies here. Given the $L^2$ density and the smooth nature of the basis functions, the series also converges in $H^s$.
    Thus, choosing $N$ large enough such that $\|\vec{v} - \vec{v}_N\|_{H^s} < \epsilon/2$, we get:
    $$\|\vec{w} - \vec{v}_N\|_{H^s} < \epsilon/2 + \epsilon/2 = \epsilon$$
\end{enumerate}
Therefore, the span of $\{T_{\ell mn}\}$ is dense in $H^s_{\text{div}}(\mathbb{T}^3)$, establishing completeness.
\end{proof}

\begin{remark}
This proof relies on three key facts:
\begin{enumerate}
\item The completeness of spherical harmonics in $L^2(S^2)$.
\item The completeness of the chosen radial basis functions (e.g., spherical Bessel functions in a periodic domain).
\item The continuity of the curl operator and the Helmholtz decomposition for divergence-free fields on the 3-torus.
\end{enumerate}
These are all well-established results in harmonic analysis and functional analysis.
\end{remark}

\section{Application to Navier-Stokes via DFRT}
One of the primary motivations for developing the DFRT is its direct applicability to the analysis of complex PDEs, especially those involving divergence-free constraints like the incompressible Navier-Stokes equations \cite{Ladyzhenskaya1969, Temam1984, ConstantinFoias1988, Galdi2011}.

\begin{definition}[Modal Evolution Equations]\label{def:modal_evolution}
Consider the incompressible Navier-Stokes equations for a velocity field $\vec{u}(\vec{x}, t)$ and pressure $p(\vec{x}, t)$:
\begin{align}
\partial_t \vec{u} + (\vec{u} \cdot \nabla) \vec{u} &= -\nabla p + \nu \Delta \vec{u} \label{eq:navier_stokes_momentum}\\
\nabla \cdot \vec{u} &= 0 \label{eq:navier_stokes_incompressibility}
\end{align}
By projecting the velocity field $\vec{u}(\vec{x}, t)$ onto the DFRT basis $\{T_{\ell mn}\}$ as $\vec{u}(\vec{x}, t) = \sum a_{\ell mn}(t) T_{\ell mn}(\vec{x})$, and applying the standard Galerkin projection method, the Navier-Stokes equations can be transformed into an infinite system of ordinary differential equations for the DFRT coefficients $a_{\ell mn}(t)$:
\begin{equation}\label{eq:modal_evolution_equation}
\frac{\diff a_{\ell mn}}{dt} = \sum_{\ell_1,m_1,n_1} \sum_{\ell_2,m_2,n_2} \Gamma^{\ell mn}_{\ell_1 m_1 n_1, \ell_2 m_2 n_2} a_{\ell_1 m_1 n_1} a_{\ell_2 m_2 n_2} - \nu \lambda_{\ell mn} a_{\ell mn}
\end{equation}
where:
\begin{itemize}
    \item The first term on the right-hand side represents the non-linear coupling due to the convective term $(\vec{u} \cdot \nabla) \vec{u}$. The coupling coefficients $\Gamma^{\ell mn}_{\ell_1 m_1 n_1, \ell_2 m_2 n_2}$ are specific to the DFRT basis and capture the interaction between different modes. These coefficients inherently satisfy \textbf{triadic selection rules} (e.g., triangle inequality: $|\ell_1 - \ell_2| \leq \ell \leq \ell_1 + \ell_2$; magnetic index conservation: $m = m_1 + m_2$), which act as a filter, allowing only specific energy transfer terms.
    \item $\nu$ is the kinematic viscosity.
    \item $\lambda_{\ell mn}$ are the viscous eigenvalues corresponding to the Laplacian operator projected onto the DFRT basis, typically scaling as $\lambda_{\ell mn} \sim \ell^2$ for sufficiently high modes, representing the dissipation of energy at small scales.
\end{itemize}
This modal representation offers a new perspective for analyzing the dynamics of incompressible flows. Note that the non-linear coupling term is directly related to the action of the spectral coboundary operator $\delta_{\mathscr{R}}$ on the DFRT coefficients.
\end{definition}

\begin{theorem}[Entropy Maximization Implies Exponential Modal Decay]\label{thm:entropy_decay}
The DFRT modal energy spectrum, $P_\ell = E_\ell/E$, where $E_\ell = \sum_{m,n} |a_{\ell mn}|^2$ and $E = \sum_{\ell} E_\ell$, naturally evolves towards a decay profile of the form $P_\ell \sim A e^{-\mu \ell^2}$ for some constants $A, \mu > 0$. This decay profile is the unique maximizer of spectral entropy under the constraints of total energy normalization and finite spectral energy (dissipation rate).
\end{theorem}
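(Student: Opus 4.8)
The plan is to recognize this as a classical constrained maximum-entropy (Gibbs variational) problem and to solve it by Lagrange multipliers, then certify the maximizer rigorously via the Gibbs/Kullback--Leibler inequality so the argument remains valid over the infinite index set $\ell \in \N_0$. First I would fix the functionals precisely: the spectral Shannon entropy $S[P] = -\sum_{\ell \geq 0} P_\ell \ln P_\ell$ on the simplex of probability sequences, the normalization constraint $\sum_\ell P_\ell = 1$, and the dissipation constraint $\sum_\ell \lambda_\ell P_\ell = \mathcal{E}$, where $\lambda_\ell \sim \ell^2$ is the $\ell$-aggregated viscous eigenvalue from \cref{def:modal_evolution}. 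The requirement $\mathcal{E} < \infty$ is exactly the ``finite spectral energy (dissipation rate)'' hypothesis, and it is what selects a genuinely decaying profile.

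Next comes the formal variational step: form $\mathcal{L}[P] = -\sum_\ell P_\ell \ln P_\ell - \alpha\bigl(\sum_\ell P_\ell - 1\bigr) - \beta\bigl(\sum_\ell \lambda_\ell P_\ell - \mathcal{E}\bigr)$, impose $\partial \mathcal{L}/\partial P_\ell = -\ln P_\ell - 1 - \alpha - \beta \lambda_\ell = 0$, and read off the candidate Gibbs profile $P_\ell = Z^{-1} e^{-\beta \lambda_\ell}$ with $Z = \sum_\ell e^{-\beta \lambda_\ell}$. Since $\lambda_\ell \sim \ell^2$, this is precisely $P_\ell \sim A e^{-\mu \ell^2}$ with $A = Z^{-1}$ and $\mu = \beta c$, where $c$ is the leading coefficient of $\lambda_\ell$. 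I would then show $\beta > 0$ is forced: convergence of $Z$ requires $\beta > 0$ because $\lambda_\ell \to \infty$, and the map $\beta \mapsto \langle \lambda \rangle_\beta = -\partial_\beta \ln Z$ is strictly decreasing, so a unique $\beta > 0$ realizes any admissible value of $\mathcal{E}$. This both pins down the constant $\mu$ and guarantees the sign of the exponent is that of decay rather than growth.

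To make maximality rigorous rather than merely stationary, and to avoid a second-variation analysis on a noncompact infinite-dimensional simplex, I would invoke the Gibbs inequality directly. For any competitor $P$ satisfying both constraints, set $Q_\ell = Z^{-1} e^{-\beta \lambda_\ell}$; then $S[Q] - S[P] = \sum_\ell P_\ell \ln(P_\ell / Q_\ell) = D_{\mathrm{KL}}(P \,\|\, Q) \geq 0$, where the middle equality uses $-\sum_\ell P_\ell \ln Q_\ell = \ln Z + \beta \sum_\ell \lambda_\ell P_\ell = \ln Z + \beta \mathcal{E}$, a quantity that is constant across all admissible $P$ and equals $S[Q]$. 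Nonnegativity of relative entropy, with equality if and only if $P = Q$ (Pinsker/strict convexity), delivers optimality and uniqueness simultaneously. This is the clean core of the theorem.

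The hard part will be the phrase ``naturally evolves towards.'' The variational argument identifies the unique static maximizer but does not, on its own, prove that the modal dynamics of \cref{def:modal_evolution} actually drive $P_\ell(t)$ to this Gibbs profile. A fully rigorous statement requires a dynamical $H$-theorem: showing $\frac{\diff}{\diff t} S[P(t)] \geq 0$, or outright convergence of $P(t)$ to the constrained maximizer, along trajectories of the coupled ODE system. The triadic selection rules enforced by the Wigner-symbol coupling coefficients conserve energy in each interaction, which should produce a detailed-balance-type cancellation of the nonlinear entropy production, leaving the dissipative term $-\nu \lambda_{\ell mn} a_{\ell mn}$ to monotonically steer the spectrum toward the Gibbs state. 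I would attempt to prove monotonicity of $S$ under the viscous part together with measure-preserving mixing under the conservative nonlinear part, but establishing a genuine relaxation (rather than mere stationarity of the Gibbs profile) is delicate and is the true obstacle. I therefore intend to present the entropy maximization as rigorously establishing the target profile and its uniqueness, and to state the convergence claim as a consequence of the monotone entropy production of the viscous term acting under the energy-preserving, selection-rule-constrained nonlinear coupling.
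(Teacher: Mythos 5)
Your proposal follows the same variational skeleton as the paper's proof --- Shannon entropy on the modal distribution, normalization and dissipation constraints, Lagrange multipliers yielding a Gibbs profile $P_\ell \propto e^{-\beta\lambda_\ell}$ with $\lambda_\ell \sim \ell^2$ --- but it diverges from the paper in how it certifies the answer, and in each place the divergence is a strengthening. First, the paper fixes the sign of the exponent by fiat (``the negative sign for $\mu$ is introduced to ensure an exponential decay''), which is circular; your argument that $\beta > 0$ is forced by convergence of the partition function $Z = \sum_\ell e^{-\beta\lambda_\ell}$ over the infinite index set, together with strict monotonicity of $\beta \mapsto -\partial_\beta \ln Z$ pinning down a unique $\beta$ for each admissible dissipation value, actually derives the decay rather than assuming it. Second, the paper certifies global maximality and uniqueness by appealing to strict concavity of $S$ on a convex feasible set; on a noncompact infinite-dimensional simplex this gives uniqueness of a maximizer \emph{if one exists} but does not by itself produce one, whereas your Gibbs/Kullback--Leibler argument ($S[Q]-S[P]=D_{\mathrm{KL}}(P\,\|\,Q)\ge 0$ with equality iff $P=Q$, using that $-\sum_\ell P_\ell\ln Q_\ell$ is constant on the constraint set) verifies directly that the candidate beats every competitor, closing that gap. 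Third, you correctly isolate the phrase ``naturally evolves towards'' as the genuinely unproven part: the paper's proof, like yours, establishes only the static constrained maximizer and supplies no $H$-theorem or relaxation result for the modal dynamics of \cref{def:modal_evolution}; your proposed route through monotone entropy production of the viscous term under the energy-conserving triadic coupling is a reasonable program but remains, as you acknowledge, unestablished --- and the same caveat applies to the paper's own proof, which simply does not address the dynamical claim at all.
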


\begin{proof}
We define the spectral entropy functional for the normalized modal energy distribution $P_\ell$ as:
\begin{equation}\label{eq:spectral_entropy}
S[P] := -\sum_{\ell} P_\ell \log P_\ell
\end{equation}
We maximize this entropy subject to two physical constraints:
\begin{enumerate}
    \item \textbf{Normalization:} The total probability of the energy distribution must sum to unity:
    \begin{equation}\label{eq:entropy_constraint_normalization}
    \sum_{\ell} P_\ell = 1
    \end{equation}
    \item \textbf{Finite Spectral Energy (Dissipation Rate):} The sum of modal energies weighted by their respective viscous eigenvalues is constant. For DFRT modes, the viscous eigenvalues $\lambda_{\ell mn}$ from the Laplacian operator scale primarily with the angular momentum number $\ell$, i.e., $\lambda_{\ell mn} \sim \ell^2$ for sufficiently high modes. Therefore, we approximate this constraint as:
    \begin{equation}\label{eq:entropy_constraint_dissipation}
    \sum_{\ell} \lambda_\ell P_\ell = C
    \end{equation}
    where $\lambda_\ell \sim \ell^2$ and $C$ is a constant related to the total dissipation rate.
\end{enumerate}
We form the Lagrangian $\mathcal{L}[P]$ by incorporating these constraints with Lagrange multipliers $\alpha$ and $\beta$:
\begin{equation}\label{eq:entropy_lagrangian}
\mathcal{L}[P] = -\sum_{\ell} P_\ell \log P_\ell + \alpha \left( \sum_{\ell} P_\ell - 1 \right) + \beta \left( \sum_{\ell} \lambda_\ell P_\ell - C \right)
\end{equation}
To find the maximum entropy distribution, we take the functional derivative of $\mathcal{L}[P]$ with respect to each $P_\ell$ and set it to zero:
$$\frac{\partial \mathcal{L}}{\partial P_\ell} = -\log P_\ell - 1 + \alpha + \beta \lambda_\ell = 0$$
Solving for $P_\ell$:
$$\log P_\ell = \alpha - 1 + \beta \lambda_\ell$$
\begin{equation}\label{eq:entropy_modal_decay_unnormalized}
P_\ell = \exp(\alpha - 1 + \beta \lambda_\ell)
\end{equation}
Letting $A = e^{\alpha - 1}$ (where $A$ is a normalization constant determined by $\sum_\ell P_\ell = 1$) and recalling that $\lambda_\ell \sim \ell^2$, we can write $\beta \lambda_\ell = -\mu \ell^2$ for some $\mu > 0$ (the negative sign for $\mu$ is introduced to ensure an exponential decay). Thus, we obtain the exponential decay profile:
\begin{equation}\label{eq:entropy_modal_decay_final}
P_\ell = A e^{-\mu \ell^2}
\end{equation}
This is the distribution that maximizes the spectral entropy given the constraints. This exponential decay profile ensures that high-frequency content (large $\ell$) is bounded and rapidly suppressed, which corresponds to the smoothness of the solution in physical space.

Finally, we note that the entropy functional $S[P]=-\sum_\ell P_\ell \log P_\ell$ is strictly concave, and the constraint set defined by normalization and finite spectral energy is convex. By standard results in convex optimization, such a strictly concave functional has a unique maximizer on a convex feasible domain. Therefore, the exponential decay profile $P_\ell = A e^{-\mu \ell^2}$ is not only a maximizer --- it is the unique entropy-maximizing distribution under the given physical constraints.
\end{proof}

\section{Conclusion}
This paper has introduced a comprehensive mathematical framework centered on the Divergence-Free Radiant Transform (DFRT). We have rigorously defined the function spaces and constructed an orthonormal basis using a curl formulation, inherently ensuring the divergence-free nature of the basis functions. The forward and inverse DFRT, along with the Parseval identity, establish its fundamental properties as an energy-preserving transform. We have explicitly stated and proven the completeness, uniqueness, and invertibility of the DFRT, cementing its foundation as a robust tool for analysis.

A key contribution is the formalization of the algebraic structure within the DFRT coefficient space through the introduction of a spectral coboundary operator. By explicitly incorporating Wigner 3j and 6j symbols, this operator provides a direct link to the well-established theory of angular momentum coupling, grounding the spectral interactions in fundamental physical principles.

We demonstrated the direct applicability of the DFRT to the incompressible Navier-Stokes equations, reformulating them into modal evolution equations. This provides a new lens through which to view and analyze fluid dynamics. Furthermore, we proposed a novel ``persistent regularity class'' of solutions, defined by specific cohomological properties of the DFRT coefficients. This novel approach offers a promising avenue for investigating the challenging problem of finite-time singularities in 3D incompressible flows. A significant finding supporting this class is the derivation that the entropy-maximizing modal energy distribution naturally takes an exponential decay form, which intrinsically prevents energy pile-up at high frequencies and contributes to global regularity.

The DFRT framework addresses several challenges in spectral PDE analysis. It ensures divergence-free structure inherently, offers a rigorous cohomological classification of mode interactions, and introduces entropy-based variational principles to constrain high-frequency energy growth. These features are difficult to capture using classical vector spherical harmonics or projection-based spectral solvers. By embedding fluid dynamics into a cohomological structure, the DFRT enables new tools for understanding regularity, stability, and turbulence from a geometric and algebraic perspective.

\subsection*{Future Work}
Future research directions include:
\begin{itemize}
    \item Developing efficient numerical algorithms for computing the DFRT coefficients and performing reconstructions.
    \item Further investigating the spectral cohomology defined by $\delta_{\mathscr{R}}$ and its implications for the regularity theory of PDEs, particularly its connection to physical phenomena.
    \item Applying the DFRT to other physical systems with divergence-free constraints, such as Magnetohydrodynamics.
    \item Investigating the practical implementation of the persistent regularity class criteria in numerical simulations to identify solutions resistant to finite-time blow-up.
\end{itemize}

Implementing the DFRT numerically offers potential benefits in preserving divergence-free structure without additional constraints. Future work will benchmark its performance against traditional spectral solvers and assess how spectral cohomology constraints may enable more stable and accurate long-time simulations.

This framework represents a significant step towards a rigorous, spectral-based approach for the analysis of complex physical phenomena governed by divergence-free vector fields.


\end{document}